\newtheorem{theorem}{Theorem}[section]
\newtheorem{proposition}[theorem]{Proposition}
\newtheorem{corollary}[theorem]{Corollary}
\theoremstyle{definition}
\newtheorem{definition}[theorem]{Definition}
\theoremstyle{remark}
\newcommand{\N}{\mathbb{N}}
\newcommand*\spa{\mathrm{span}}
\title{Tensor products of recurrent hypercyclic semigroups}
\author{Andreas Weber\footnote{Email: andreas.weber@math.uni-karlsruhe.de,
						   Address:  Englerstr. 2, 76128 Karlsruhe, Germany.}\\
						   {\large  Institut f\"ur Algebra und Geometrie,
						   Universit\"at Karlsruhe (TH)} }						   
\date{}
\begin{document}

\maketitle 
\begin{abstract} 
We study tensor products of strongly continuous semigroups on Banach spaces that satisfy the hypercyclicity criterion, the recurrent hypercyclicity criterion or are chaotic.  \\
\noindent{\em Keywords:} Hypercyclic semigroups, recurrent hypercyclic semigroups, 
	tensor products of strongly continuous semigroups.
\end{abstract}		

\section{Introduction and preliminaries}

In this note we study tensor products $T(t)\otimes S(t)$ of strongly continuous semigroups
$T(t)$ and $S(t)$ acting on Banach spaces $X$ and $Y$. If $\alpha$ denotes a uniform 
crossnorm on the (algebraic) tensor product $X\otimes Y$ we denote by $X\tilde{\otimes}_{\alpha}Y$
the completion of the normed space $(X\otimes Y,\alpha)$. Our main purpose is to show that for 
strongly continuous semigroups $T(t), S(t)$ satisfying the recurrent hypercyclicity criterion 
and a uniform crossnorm $\alpha$ on  $X\otimes Y$ the semigroup $T(t)\otimes S(t)$ 
acting on $X\tilde{\otimes}_{\alpha}Y$
satisfies the recurrent hypercyclicity criterion, too.  
An important ingredient in the proof of this result is the work by
Desch and Schappacher in \cite{MR2184060}. 
Our result is of particular interest when one
is working with $L^p$ spaces of the form $L^p(M_1\times M_2, \mu_1\otimes\mu_2), p\geq 1$, 
for measure spaces $(M_i,\mu_i), i=1,2$, as there is a uniform crossnorm $\alpha$ such that
$L^p(M_1\times M_2, \mu_1\otimes\mu_2) = L^p(M_1,\mu_1)\tilde{\otimes}_{\alpha} L^p(M_2,\mu_2)$, cf. \cite{MR1209438}. 
Applications of our results to $L^p$ heat semigroups on certain Riemannian manifolds 
are contained in \cite{Ji:nr}.

Similar results for tensor products of semigroups or operators  can be found in 
\cite{MR2184057,MR2010778}.
\subsection{Hypercyclic and recurrent hypercyclic semigroups}

A strongly continuous semigroup $T(t)$ on a Banach space $X$ is called {\em hypercyclic}
if there exists an $x\in X$ such that its orbit $\{ T(t)x : t\geq 0\}$ is dense in $X$.\\
If additionally the set of periodic points 
$\{ x\in X : \exists t>0 \mbox{~such that~} T(t)x=x\}$ is dense in $X$, the semigroup $T(t)$
is called {\em chaotic}.\\

It is well known that a strongly continuous semigroup $T(t)$ on a separable Banach space 
$X$ is hypercyclic if and only if it is {\em topological transitive}, i.e. for any pair of
non-empty open subsets $U,V \subset X$ there exists some $t>0$ with
$T(t)U\cap V \neq\emptyset$, cf. \cite{MR1468101}.

A sufficient condition for hypercyclicity is given by the so-called {\em hypercyclicity criterion},
cf. \cite{MR2152389} for this variant:
\begin{definition}\textup{(Hypercyclicity Criterion).}\label{hypercyclicity criterion}
 A strongly continuous semigroup $T(t)$ on a separable Banach space $X$ satisfies 
 the hypercyclicity criterion if for all non-empty open subsets
 $U,V,W\subset X$ with $0\in W$ there exists a $t>0$ such that
 $$
   T(t)U \cap W \neq\emptyset \qquad\mbox{and}\qquad T(t)W\cap V \neq\emptyset.
 $$
 (Note that the same $t$ is used in both cases.)
\end{definition}
It should be remarked that a strongly continuous semigroup $T(t)$ on $X$ satisfies the hypercyclicity criterion if and only if the semigroup 
$$ T(t) \times T(t) :=
  \begin{pmatrix}
    T(t) & 0\\
    0    & T(t)
  \end{pmatrix}
$$  
is hypercyclic on $X\times X$, cf. \cite[Theorem 2.5]{MR2152389}. 
This result easily 
generalizes (with the same proof) to:
\begin{proposition}\label{nproducts}
Let $T(t)$ denote a strongly continuous semigroup on a Banach Space $X$ that satisfies
the hypercyclicity criterion. Then the diagonal semigroup 
$
T^n(t):=   T(t)\times\cdots\times T(t)
$
is hypercylic on $X^n:= X\times\cdots\times X$ for any natural number $n\geq 1$.
\end{proposition}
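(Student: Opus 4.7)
The plan is to extend the argument of \cite[Theorem 2.5]{MR2152389}, which handles the case $n=2$, to arbitrary $n$; the cleanest route is to reformulate the hypercyclicity criterion in its dense-set/quasi-inverse form and then verify that form on $X^n$ componentwise. This is essentially what the cited proof does for $n=2$, and the product structure of $X^n$ makes the generalization formal.

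First I would invoke the standard equivalent formulation (which is in fact what the proof of \cite[Theorem 2.5]{MR2152389} establishes along the way): $T(t)$ satisfies Definition~\ref{hypercyclicity criterion} if and only if there exist an unbounded increasing sequence $(t_k)\subset(0,\infty)$, dense subsets $X_0,Y_0\subset X$, and maps $S_k\colon Y_0\to X$ (not necessarily linear or continuous) such that $T(t_k)x\to 0$ for every $x\in X_0$, $S_ky\to 0$ for every $y\in Y_0$, and $T(t_k)S_ky\to y$ for every $y\in Y_0$.

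Next, on $X^n$ (which is separable because $X$ is), I would take $X_0^n$ and $Y_0^n$ as the $n$-fold Cartesian products of $X_0$ and $Y_0$; these are dense in $X^n$ because a finite product of dense sets is dense in the product topology, which agrees with the norm topology on $X^n$. Define $\hat S_k\colon Y_0^n\to X^n$ by $\hat S_k(y_1,\dots,y_n)=(S_ky_1,\dots,S_ky_n)$. The three convergences transfer coordinatewise to $T^n(t_k)$ and $\hat S_k$, so $T^n(t)$ satisfies the dense-set form of the hypercyclicity criterion, and therefore is hypercyclic on $X^n$ (equivalently, topologically transitive by Birkhoff).

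The main (mild) obstacle is the equivalence between the topological formulation in Definition~\ref{hypercyclicity criterion} and the dense-set form with $X_0,Y_0,S_k$; this is precisely what the proof of \cite[Theorem 2.5]{MR2152389} supplies, so one only needs to cite it and observe that the subsequent step from $T(t)$ to $T^n(t)$ is bookkeeping. If one preferred to argue directly with the three-open-set condition, one would induct on $n$: given basic open product sets $U_1\times\dots\times U_n$ and $V_1\times\dots\times V_n$, pass to translates $u_i+W$ and $v_i+W$ with $W$ a balanced zero-neighborhood contained in $U_i-u_i$ and $V_i-v_i$ for every $i$, and then apply the criterion for $T(t)$, using the linearity of $T(t)$ to combine the $n$ factors at a common time $t$; this mirrors the two-fold argument but is notationally heavier than the dense-set route.
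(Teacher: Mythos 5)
Your proposal is correct and matches what the paper intends: the paper gives no separate argument but simply asserts that the proof of \cite[Theorem 2.5]{MR2152389} carries over verbatim to $n$ factors, and that proof proceeds exactly via the equivalence with the dense-set/quasi-inverse form of the criterion followed by the coordinatewise extension you describe. Your observation that the key convergences transfer componentwise and that separability of $X^n$ lets one invoke transitivity is precisely the ``bookkeeping'' the paper leaves implicit.
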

The discrete version of the next corollary can be found in \cite[Corollary 6]{MR2025025}.
\begin{corollary}\label{corollary nproducts}
Let $T(t)$ denote a strongly continuous semigroup on a Banach Space $X$ that satisfies
the hypercyclicity criterion. Then the diagonal semigroup $T^n(t)$ satisfies the
hypercyclicity criterion on $X^n$, $n\geq 1$, too.
\end{corollary}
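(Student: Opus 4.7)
The plan is to bootstrap Corollary 1.3 out of Proposition 1.2 using the characterization stated just before it: a strongly continuous semigroup $S(t)$ on a separable Banach space satisfies the hypercyclicity criterion if and only if $S(t)\times S(t)$ is hypercyclic on the product.

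Applying this characterization to $S(t):=T^n(t)$ acting on $X^n$, it suffices to show that $T^n(t)\times T^n(t)$ is hypercyclic on $X^n\times X^n$. The key observation is that this product semigroup is, up to the obvious permutation of coordinates which is a topological isomorphism, nothing other than the diagonal semigroup $T^{2n}(t)=T(t)\times\cdots\times T(t)$ ($2n$ factors) acting on $X^{2n}$. Since $T(t)$ satisfies the hypercyclicity criterion on $X$ by assumption, Proposition 1.2 applied with $n$ replaced by $2n$ gives that $T^{2n}(t)$ is hypercyclic on $X^{2n}$. Transporting this along the coordinate permutation yields hypercyclicity of $T^n(t)\times T^n(t)$ on $X^n\times X^n$.

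Invoking the characterization once more in the reverse direction then shows that $T^n(t)$ satisfies the hypercyclicity criterion on $X^n$, completing the argument.

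There is no real obstacle here; the proof is essentially a two-line reduction, and the only point that deserves explicit mention is the harmless identification of $X^n\times X^n$ with $X^{2n}$ under which $T^n(t)\times T^n(t)$ corresponds to $T^{2n}(t)$. All separability hypotheses needed to apply the cited characterization pass trivially from $X$ to the finite products $X^n$ and $X^{2n}$.
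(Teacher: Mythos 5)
Your argument is correct and is essentially identical to the paper's own proof: both apply Proposition \ref{nproducts} with $2n$ factors to get hypercyclicity of $T^n(t)\times T^n(t)=T^{2n}(t)$ and then invoke the characterization from \cite[Theorem 2.5]{MR2152389} to conclude that $T^n(t)$ satisfies the hypercyclicity criterion. No gaps.
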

\begin{proof}
 It follows from Proposition \ref{nproducts} that the semigroup $T^n(t)\times T^n(t)= T^{2n}(t)$
 is hypercyclic, and hence, by \cite[Theorem 2.5]{MR2152389}, the semigroup $T^n(t)$
 satisfies the hypercyclicity criterion. 
\end{proof}
As in \cite{MR2184060} we say that a strongly continuous semigroup satisfies the  recurrent hypercyclicity criterion if for open subsets as in Definition \ref{hypercyclicity criterion}
the set of all times $t>0$ with 
$T(t)U \cap W \neq\emptyset$ and $T(t)W\cap V \neq\emptyset$
does not have arbitrarily large holes: 
\begin{definition}
 A strongly continuous semigroup $T(t)$ on a separable Banach space $X$ satisfies
 the {\em recurrent hypercyclicity criterion} if for all non-empty open subsets $U,V,W\subset X$ with
 $0\in W$ there exists a constant $L\geq 0$ such that each interval $[t,t+L]$ containes
 an $s$ with
 $$
  T(s)U \cap W \neq \emptyset\qquad\mbox{and}\qquad T(s)W\cap V\neq\emptyset.
 $$ 
\end{definition}
Of course, any  semigroup that satisfies the recurrent hypercyclicity criterion 
is hypercyclic as it satisfies the hypercyclicity criterion.

\subsection{Tensor products}

For Banach spaces $X$ and $Y$ we denote by $X\otimes Y$ their (algebraic) tensor product. 
Furthermore, let $\alpha$ be a {\em tensor norm} (or {\em uniform crossnorm})
on $X\otimes Y$ (for a definition see \cite[12.1]{MR1209438} or \cite[6.1]{MR1888309}).
Then $\alpha$ is in particular a reasonable crossnorm on $X\otimes Y$ which implies that
for $x\in X$ and $y\in Y$ we have
$$
 \alpha(x\otimes y) = ||x||_X\cdot ||y||_Y.
$$
If we define  for $z\in X\otimes Y$ 
$$
 \pi(z) = \inf \left\{ \sum_{i=1}^n ||x_i||_X\cdot ||y_i||_Y : z=\sum_{i=i}^n x_i\otimes y_i \right\}
$$
this yields a tensor norm and is called {\em projective norm}. Actually, this norm is the 
greatest reasonable crossnorm on $X\otimes Y$, i.e. if $\alpha$ is another reasonable 
crossnorm it follows
$\alpha\leq \pi$ (cf.  \cite[p. 64]{MR0094675} or \cite[Proposition 6.1]{MR1888309}).
For any norm $\alpha$ on $X\otimes Y$ we denote by $X\tilde{\otimes}_{\alpha}Y$
the completion of the normed space $(X\otimes Y,\alpha)$.

For bounded operators $T: X\to X$, $S: Y\to Y$, and any uniform crossnorm 
$\alpha$  the tensor product $T\otimes S$ is a bounded operator on $(X\otimes Y,\alpha)$ by
definition of a uniform crossnorm.  The unique extension of $T\otimes S$ to 
$X\tilde{\otimes}_{\alpha} Y$ is, for simplicity, also denoted by $T\otimes S$.

Similarly, if $T(t): X\to X$ and $S(t): Y\to Y$ are strongly continuous semigroups their tensor product
$T(t)\otimes S(t)$ is a strongly continuous semigroup on $(X\otimes Y,\alpha)$ for 
any uniform crossnorm $\alpha$. To see this, let $z\in X\otimes Y$. Then we have
\begin{eqnarray*}
 \alpha(T(t)\otimes S(t) z - z)  &\leq& \alpha(T(t)\otimes S(t) z - T(t)\otimes I z) +
 							 \alpha(T(t)\otimes I z - z)\\
					     &\leq& \pi(T(t)\otimes S(t) z - T(t)\otimes I z) +
 													    \pi(T(t)\otimes I z - z).
\end{eqnarray*}
For the first term on the right hand side it follows if $z=\sum_i x_i\otimes y_i$
is any representation of $z$
$$
 \pi(T(t)\otimes S(t) z - T(t)\otimes I z) \leq \sum_i ||T(t)x_i ||_X\cdot ||S(t)y_i-y_i||_Y \to 0 
 	\quad (t\to 0^+).
$$
As an analogous argument shows that the second term goes to zero if $t\to 0^+$, it follows
that $T(t)\otimes S(t)$ is strongly continuous.
%


\section{Main results}
\begin{theorem}\label{main thm}
Let $T(t), S(t)$ denote  strongly continuous  semigroups on Banach spaces $X$ and $Y$, respectively,
 and assume that $T(t)$ satisfies the recurrent hypercyclicity criterion. 
 Furthermore, $\alpha$ denotes a uniform crossnorm on $X\otimes Y$. 
\begin{itemize}
 \item[\textup{(a)}] If  $S(t)$  satisfies the hypercyclicity criterion, 
 	the semigroup $T(t)\otimes S(t)$ 
 		on $X\tilde{\otimes}_{\alpha} Y$ satisfies the hypercyclicity criterion, too.
 \item[\textup{(b)}] If $S(t)$ satisfies the recurrent hypercyclicity criterion, 
 	the semigroup $T(t)\otimes S(t)$ on $X\tilde{\otimes}_{\alpha} Y$ satisfies the 
	recurrent hypercyclicity criterion, too.
\end{itemize}
\end{theorem}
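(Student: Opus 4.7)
My strategy is to reduce the problem, via finite-sum tensor approximations, to synchronizing ``return times'' for the two semigroups on appropriate product spaces. Given open $U,V,W\subset X\tilde\otimes_\alpha Y$ with $0\in W$, density of $X\otimes Y$ in the completion together with $\alpha\leq\pi$ (so that every $\alpha$-neighborhood contains a $\pi$-neighborhood) yields representatives $u=\sum_{i=1}^n x_i\otimes y_i\in U$ and $v=\sum_{i=1}^n x_i'\otimes y_i'\in V$ with a common number $n$ of summands (pad with zero tensors if needed). Fix $\varepsilon>0$ so that the $\pi$-balls of radius $\varepsilon$ about $u$, $v$ and $0$, intersected with $X\otimes Y$, lie inside $U$, $V$ and $W$ respectively.

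Next I would establish the recurrent analogue of Corollary~\ref{corollary nproducts}, namely that $T^n(t)$ inherits the recurrent hypercyclicity criterion from $T(t)$ (and similarly for $S(t)$ under the hypothesis of~(b)), by the same scheme as Corollary~\ref{corollary nproducts} combined with the characterization of the recurrent criterion from \cite{MR2184060}. Writing $\vec x,\vec x'\in X^n$ and $\vec y,\vec y'\in Y^n$ for the vectors of components of $u$ and $v$, I work with open balls $\tilde U_T=B(\vec x,\eta_T)$, $\tilde V_T=B(\vec x',\eta_T)$, $\tilde W_T=B(0,\eta_T)$ in $X^n$ and analogous $\tilde U_S,\tilde V_S,\tilde W_S$ in $Y^n$ of radius $\eta_S$; the parameters $\eta_T,\eta_S>0$ remain free to be fixed at the end. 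Let $A_T$ (resp.\ $A_S$) denote the set of $t>0$ for which $T^n(t)\tilde U_T\cap\tilde W_T\ne\emptyset$ and $T^n(t)\tilde W_T\cap\tilde V_T\ne\emptyset$ (resp.\ the analogous conditions on the $Y^n$-side). By the promoted criterion, $A_T$ is relatively dense in $[0,\infty)$ with some gap $L_T$, while $A_S$ is non-empty open in case~(a) and relatively dense in case~(b). The central task is to produce $t\in A_T\cap A_S$ in case~(a), and a relatively dense subset of $A_T\cap A_S$ in case~(b).

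Granting such a common $t$, with witnesses $\vec\xi\in\tilde U_T$, $\vec\xi'\in\tilde W_T$ (so $T^n(t)\vec\xi\in\tilde W_T$ and $T^n(t)\vec\xi'\in\tilde V_T$) and analogous $\vec\eta,\vec\eta'\in Y^n$, set $z=\sum_i\xi_i\otimes\eta_i$ and $z'=\sum_i\xi_i'\otimes\eta_i'$. The identity $a\otimes b-x\otimes y=(a-x)\otimes b+x\otimes(b-y)$ and the definition of $\pi$ give
$$
\pi(z-u)\leq\sum_i\|\xi_i-x_i\|_X\,\|\eta_i\|_Y+\sum_i\|x_i\|_X\,\|\eta_i-y_i\|_Y
$$
and
$$
\pi\bigl((T(t)\otimes S(t))z\bigr)\leq\sum_i\|T(t)\xi_i\|_X\,\|S(t)\eta_i\|_Y\leq n\,\eta_T\,\eta_S,
$$
together with analogous bounds for $z'$ and for $(T(t)\otimes S(t))z'-v$. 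Crucially, both factors $\|T(t)\xi_i\|$ and $\|S(t)\eta_i\|$ are separately small (by $\eta_T$ and $\eta_S$), so no a~priori control on the operator norms $\|T(t)\|,\|S(t)\|$ is required. Taking $\eta_T,\eta_S$ small enough (depending only on the fixed norms $\|x_i\|,\|y_i\|,\|x_i'\|,\|y_i'\|$) makes all four estimates strictly less than $\varepsilon$, so $z\in U$, $(T(t)\otimes S(t))z\in W$, $z'\in W$, and $(T(t)\otimes S(t))z'\in V$. In case~(b) one runs this with a relatively dense family of common $t$'s to obtain the bounded-gap property.

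The principal obstacle is the time synchronization: syndeticity of $A_T$ and mere non-emptiness (or even syndeticity) of $A_S$ do not in general force the two sets to meet. The natural route is to show that the direct-sum semigroup $T^n(t)\oplus S^n(t)$ on $X^n\oplus Y^n$ satisfies the (recurrent) hypercyclicity criterion; topological transitivity of the direct sum then yields a common $t$ in case~(a), and the recurrent version yields a relatively dense family of such times in case~(b). Establishing this product-type statement is the heart of the argument and is where the machinery of \cite{MR2184060} is the essential input.
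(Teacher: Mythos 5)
Your plan is essentially the paper's proof: reduce to finite tensor representatives with a common number $n$ of summands, pass to the product space $X^n\times Y^n$, and synchronize the return times of the two factors by showing that the diagonal semigroup $T^n(t)\times S^n(t)$ satisfies the appropriate criterion there. The one step you explicitly leave open --- that the direct sum inherits the (recurrent) hypercyclicity criterion --- is precisely what the paper supplies by citing Theorem 5.1 and Corollary 5.6 of \cite{MR2184060} together with its own Corollary~\ref{corollary nproducts} on $n$-fold diagonal powers; the paper also replaces your explicit $\pi$-norm $\varepsilon$-estimates by the slicker device of pulling back $U$, $V$, $W$ through the continuous map $\psi_n$, but these are equivalent.
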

\begin{proof}
In the following, we use $||(x,y)|| = \sup\{||x||_X, ||y||_Y\}$ as norm on the product $X\times Y$.
Note, that the topology induced by this norm coincides with the usual product topology.
As $\alpha$ is a reasonable crossnorm, the canonical bilinear map
$$
 \psi: (X\times Y, ||\cdot||) \to (X\otimes Y,\alpha),\, (x,y)\mapsto x\otimes y
$$
is continuous and has norm $\leq 1$ (cf. \cite[p. 64]{MR0094675}). Hence, for any $n\geq 1$,
the mapping
$$
 \psi_n: \left\{
 		\begin{array}{lcl}
		 X^n\times Y^n & \to & X\otimes Y\\
		 (x_1,\ldots,x_n,y_1,\ldots, y_n) &\mapsto & \sum_{k=1}^n \psi(x_k,y_k)
		\end{array}
	\right.	
$$
is continuous for the norm 
$|| (x_1,\ldots,x_n,y_1,\ldots, y_n)|| =  \sup\{||x_k||_X, ||y_k||_Y : k=1,\ldots,n\}$ on
$ X^n\times Y^n$. 

For the proof of part (a) we proceed as follows:
Let $U, V, W$ be non-empty open subsets of $X\tilde{\otimes}_{\alpha} Y$ with $0\in W$. 
As $X\otimes Y=\spa\left(\psi(X\times Y)\right)$ is dense in 
$X\tilde{\otimes}_{\alpha} Y$, we find elements
$$
 \sum_{k=1}^m x_k\otimes y_k \in U
$$
and
$$
 \sum_{k=1}^n p_k\otimes q_k \in V.
$$
Extending one of the sums by zero summands if necessary we may assume $m=n$.
Then $\psi_n^{-1}(U)$ and $\psi_n^{-1}(V)$ are non-empty open subsets of
$X^n\times Y^n$.  Furthermore, $0 \in \psi_n^{-1}(W)$.

As $T(t)$ satisfies the recurrent hypercyclicity criterion and $S(t)$ satisfies the hypercyclicity criterion, it follows from
 \cite[Theorem 5.1]{MR2184060} that the semigroup
 $$
  \begin{pmatrix}
    T(t) & 0\\
    0    & S(t)
  \end{pmatrix} : X\times Y \to X\times Y
 $$
satisfies the hypercyclicity criterion and hence,
by Corollary \ref{corollary nproducts}, the semigroup $T^n(t)\times S^n(t)$ satisfies the 
hypercyclicity criterion, too.
Therefore, there exists $t>0$ such that 
$$
\Big(T^n(t)\times S^n(t)\psi_n^{-1}(U)\Big)\cap \psi_n^{-1}(W) \neq \emptyset
$$
and
$$
\Big(T^n(t)\times S^n(t)\psi_n^{-1}(W)\Big)\cap \psi_n^{-1}(V) \neq \emptyset.
$$
As 
$\psi_n(T^n(t)\times S^n(t)\psi_n^{-1}(U)) \subset T(t)\otimes S(t) U$ and
$\psi_n(\psi_n^{-1}(V)) \subset V$ the proof of part (a) is complete. 

For the proof of part (b) let   
$U,V,W \subset X\tilde{\otimes}_{\alpha} Y$ be non-empty open subsets  with $0\in W$. 
As in part (a) we find an $n\in \N$ such that the sets 
$\psi_n^{-1}(U), \psi_n^{-1}(V),$ and $\psi_n^{-1}(W)$ are non-empty open subsets of $X^n\times Y^n$
with $0\in \psi_n^{-1}(W)$.
Since both semigroups $T(t)$ and $S(t)$ satisfy the recurrent hypercyclicity criterion, it follows from
\cite[Corollary 5.6]{MR2184060} that the semigroup 
$T^n(t)\times S^n(t)$ satisfies the recurrent hypercyclicity criterion, too. One can now conclude as
in the proof of part (a) that the semigroup $T(t)\otimes S(t)$ satisfies the recurrent hypercyclicity criterion.
%
%
\end{proof}

\begin{corollary}
Let $T(t), S(t)$ denote  chaotic  semigroups on Banach spaces $X$ and $Y$.
If $\alpha$ denotes a uniform cross norm on the algebraic tensor product $X\otimes Y$ the semigroup $T(t)\otimes S(t)$ on $X\tilde{\otimes}_{\alpha} Y$ satisfies the recurrent hypercyclicity criterion.
\end{corollary}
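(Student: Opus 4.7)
My plan is to derive the corollary directly from Theorem~\ref{main thm}(b). The one point to verify is that both $T(t)$ and $S(t)$ satisfy the recurrent hypercyclicity criterion; once this is in hand, part~(b) applies verbatim and yields the claim.

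So the substantive task is to establish the implication \emph{chaotic $\Rightarrow$ recurrent hypercyclicity criterion} for an arbitrary strongly continuous semigroup $R(t)$ on a separable Banach space. I would proceed as follows. Fix non-empty open $U, V, W$ with $0 \in W$. Since chaoticity entails hypercyclicity, there exist $s_0 > 0$ and $u_0 \in U$ with $R(s_0) u_0 \in W$, so the set $U \cap R(s_0)^{-1}(W)$ is open and non-empty; by density of periodic points it contains a periodic element $\tilde u$ of some period $p$, and then $R(s_0 + k p)\,\tilde u = R(s_0)\,\tilde u \in W$ for every $k \in \N$. The analogous construction applied to the pair $(W, V)$ supplies a periodic $\tilde w \in W$ of some period $q$ and a time $t_0 > 0$ with $R(t_0 + j q)\,\tilde w \in V$ for every $j \in \N$.

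The main obstacle is to combine these two arithmetic progressions into a single syndetic set of times at which both membership conditions hold, since $s_0 + p\N$ and $t_0 + q\N$ need not meet when $p$ and $q$ are incommensurable. I would handle this by thickening: strong continuity together with the openness of $W$ and $V$ makes each of the conditions $R(s)\,\tilde u \in W$ and $R(s)\,\tilde w \in V$ stable under small perturbations of $s$, so the two time-sets contain $p$-periodic, respectively $q$-periodic, non-empty open subsets of $\R_{\geq 0}$. A Kronecker / minimal-rotation argument on $\R/p\Z \times \R/q\Z$ then shows that these thickened subsets have syndetic intersection, producing the required constant $L$; in the degenerate case when $p$ and $q$ are commensurable, the argument is completed by a suitable replacement of $\tilde u$ or $\tilde w$, using the abundance of periodic points in the open sets $U \cap R(s_0)^{-1}(W)$ and $W \cap R(t_0)^{-1}(V)$ to realign the phases. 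This implication is in fact already implicit in the machinery of \cite{MR2184060}, so the cleanest write-up is simply to cite that source for \emph{chaotic $\Rightarrow$ recurrent hypercyclicity criterion} and then apply Theorem~\ref{main thm}(b) to both factors $T(t)$ and $S(t)$.
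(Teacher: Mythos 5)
Your proposal is correct and takes essentially the same route as the paper: the corollary is deduced by applying Theorem~\ref{main thm}(b) once one knows that every chaotic semigroup satisfies the recurrent hypercyclicity criterion, and for that implication the paper, exactly as you ultimately recommend, simply cites \cite[Corollary 6.2]{MR2184060}. Your intermediate sketch of a self-contained proof of that implication (periodic points, thickened arithmetic progressions, Kronecker) is extra material not present in the paper and not needed for the argument as written.
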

\begin{proof}
 This follows directly from Theorem \ref{main thm} since any chaotic semigroup satisfies the recurrent
 hypercyclicity criterion, cf. \cite[Corollary 6.2]{MR2184060}.
\end{proof}

In order to state the next corollary, we need some preparation. Let $T$ denote a bounded operator
on a Banach space $X$. $T$ is called chaotic, if -- similar to the case of semigroups --
there is an $x\in X$ whose orbit $\{ T^n x : n\in \N \}$ is dense in $X$ and if the set of periodic
points $\{ x\in X : \exists n\in\N\mbox{~such that~} T^n x = x\}$ is dense in $X$ as well.
\begin{corollary}
 Let $T(t), S(t)$ denote strongly continuous semigroups on Banach spaces $X$ and $Y$ and
 $\alpha$ a uniform crossnorm.
 \begin{itemize}
  \item[\textup{(a)}] If there is a $t_0>0$ such that $T(t_0)$ is a chaotic operator  and $S(t_0)$ has 
  a dense set
  of periodic points, the semigroup $T(t)\otimes S(t)$ is chaotic.
  \item[\textup{(b)}] If there are $p_1, p_2, q_1, q_2\in\N$ such that $T(p_1/q_1)$ 
  	and $S(p_2/q_2)$ are chaotic the tensor product $T(t)\otimes S(t)$ is chaotic.
 \end{itemize}
\end{corollary}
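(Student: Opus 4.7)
The plan is to verify the two defining properties of a chaotic semigroup---density of periodic points and hypercyclicity---separately for $T(t)\otimes S(t)$ in each part. For density of periodic points the argument is the same in both parts: choose a common base time $\tau>0$ such that both discrete iterates lie in $\tau\N$, namely $\tau=t_0$ in (a) and $\tau=1/(q_1q_2)$ in (b). If $x$ is periodic for the first discrete operator and $y$ for the second, then $(T(\tau)\otimes S(\tau))^k(x\otimes y)=x\otimes y$ for a suitable $k\in\N$, so $x\otimes y$ is periodic for the semigroup $T(t)\otimes S(t)$; finite sums of such elementary tensors remain periodic by taking LCMs of the individual periods. Density of the two discrete periodic-point sets in $X$ and $Y$ then yields density of the span of their tensor products in $X\tilde{\otimes}_{\alpha} Y$.

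For hypercyclicity in part (b) the key observation is that a chaotic discrete iterate lifts directly to a chaotic semigroup---the discrete orbit is contained in the continuous orbit and discrete periodic points are semigroup-periodic. Hence both $T(t)$ and $S(t)$ are chaotic semigroups, so by Corollary 6.2 of \cite{MR2184060} both satisfy the recurrent hypercyclicity criterion, and Theorem \ref{main thm}(b) delivers the recurrent hypercyclicity criterion, and in particular hypercyclicity, for $T(t)\otimes S(t)$.

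Part (a) is delicate because the hypothesis on $S$ is too weak to invoke Theorem \ref{main thm}(a) directly. I would instead adapt its proof. Given nonempty open $U,V,W\subset X\tilde{\otimes}_{\alpha} Y$ with $0\in W$, approximate elements of $U$ and $V$ by sums $\sum_{j=1}^M a_j\otimes w_j$ and $\sum_{j=1}^M b_j\otimes w_j$ in which all $w_j$ are chosen from the dense set of $S(t_0)$-periodic points and share a common period $\tau=Nt_0$ (rewriting both sums using one common list of $w_j$'s, padding with zeros as needed, and using density of $S(t_0)$-periodic points to perturb any initially non-periodic $y$-component to a nearby periodic one). At any time $t=m\tau$, the $S$-factor acts as the identity on each $w_j$, so verifying the hypercyclicity criterion for $T(t)\otimes S(t)$ at such a $t$ reduces to finding $m\in\N$ for which the diagonal action of $T(\tau)^m$ on $X^M$ carries $\Psi^{-1}(U)$ into $\Psi^{-1}(W)$ and $\Psi^{-1}(W)$ into $\Psi^{-1}(V)$, where $\Psi\colon X^M\to X\tilde{\otimes}_{\alpha} Y$, $(c_1,\ldots,c_M)\mapsto\sum_j c_j\otimes w_j$, is the continuous linear map analogous to the $\psi_n$ used in the proof of Theorem \ref{main thm}. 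Since $T(t_0)$ is a chaotic discrete operator, so is $T(\tau)=T(t_0)^N$, and via the discrete analogue of Corollary \ref{corollary nproducts} (cf.\ \cite[Corollary~6]{MR2025025}) its diagonal action on $X^M$ satisfies the discrete hypercyclicity criterion, furnishing the required $m$. The principal obstacle is exactly this substitution of density of $S(t_0)$-periodic points for the hypercyclicity criterion on $S$ required in Theorem \ref{main thm}(a); it is resolved by working only at times at which $S$ trivializes on the relevant finite set of periodic vectors, so that the discrete chaos of $T(t_0)$ and its diagonal products supplies the needed recurrence on the $X$-factor.
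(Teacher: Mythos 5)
Your argument is correct, and part (b) coincides with the paper's proof: both lift the chaotic discrete iterates to chaotic semigroups, obtain the recurrent hypercyclicity criterion for the tensor product from Theorem \ref{main thm}(b) together with \cite[Corollary 6.2]{MR2184060}, and produce periodic points of $T(t)\otimes S(t)$ by synchronizing the rational periods (your common base time $\tau=1/(q_1q_2)$ is just a uniform way of solving $np_1/q_1=mp_2/q_2$, which is exactly what the paper does). Part (a) is where you genuinely diverge. The paper's proof is a one-line reduction to the discrete theory: it cites \cite[Corollary 1.12]{MR2010778}, which asserts precisely that the tensor product of a chaotic operator with an operator having a dense set of periodic points is chaotic, applies this to $T(t_0)\otimes S(t_0)$, and observes that a semigroup containing a chaotic operator is chaotic. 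You instead reprove the substance of that citation directly in the semigroup setting: choosing the $Y$-components of the approximating tensors from the $S(t_0)$-periodic vectors, freezing the $S$-factor at times in $\tau\N$, and transporting the hypercyclicity criterion through the map $\Psi$ from the diagonal powers of $T(\tau)$ on $X^M$. This is a sound and self-contained alternative; its one unstated ingredient is that a chaotic operator satisfies the discrete Hypercyclicity Criterion, which you need before you can invoke the discrete analogue of Corollary \ref{corollary nproducts} for $T(\tau)^{\oplus M}$ --- this is a known theorem of B\`es and Peris, but it deserves an explicit citation, exactly as the paper cites its semigroup counterpart \cite[Corollary 6.2]{MR2184060}. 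What the paper's route buys is brevity; what yours buys is independence from \cite{MR2010778} and an argument in the same style as the proof of Theorem \ref{main thm}.
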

\begin{proof}
We first prove (a).
From \cite[Corollary 1.12]{MR2010778} it follows that the operator $T(t_0)\otimes S(t_0)$
is chaotic and hence, the semigroup $T(t)\otimes S(t)$ is chaotic.\\
To show (b) we first remark that both semigroups $T(t)$ and $S(t)$ are chaotic and hence their
tensor product satisfies the recurrent hypercyclicity criterion. It remains to show the density of 
the periodic points. Lets denote by $P_1$ (resp. $P_2$) the set of periodic points
of the operator $T(p_1/q_1)$ (resp. $S(p_2/q_2)$). These are dense linear spaces and hence, $P_1\otimes P_2$ is dense in $X\tilde{\otimes}_{\alpha}Y$. Furthermore, if 
$x\otimes y \in P_1\otimes P_2$ there exist 
$n,m\in \N$ with  $T(p_1/q_1)^nx=T(np_1/q_1)x=x$, $S(p_2/q_2)^m y=S(mp_2/q_2)y=y$,
and $np_1/q_1 = mp_2/q_2 =:t$.
Then we have
$$
 T(t)\otimes S(t)(x\otimes y) =  T(t)x\otimes S(t)y = x\otimes y
$$
and $x\otimes y$ is therefore a periodic point of $T(t)\otimes S(t)$. Since
$$
P_1\otimes P_2 = \spa\left\{ x_k\otimes y_k : x_k\in P_1, y_k \in P_2\right\}
$$ 
and as with
$x_k\otimes y_k, k=1,\ldots, n$, also $\sum_{k=1}^n x_k\otimes y_k$ is a periodic point,  
$P_1\otimes P_2$ consists only of periodic points.
\end{proof}

\subsubsection*{Acknowledgement}
I am indebted to the reviewer who made many valuable comments that greatly improved the
exposition of this paper.

\bibliographystyle{amsplain}
\bibliography{dissertation,hypercyclic}

\end{document}